\newtheorem{theorem}{Theorem}[]
\newtheorem{definition}[theorem]{Definition}
\newtheorem{lemma}[theorem]{Lemma}
\newtheorem{proposition}[theorem]{Proposition}
\newtheorem{corollary}[theorem]{Corollary}
\titleformat{\section}[display]{\normalfont\huge\bfseries\centering}{\centering\chaptertitlename\thechapter}{10pt}{\Large}
\titlespacing*{\section}{0pt}{0ex}{0ex}
\newcommand{\Hom}{\mathrm{Hom}}
\newcommand{\Aut}{\mathrm{Aut}}
\newcommand{\Circ}{\mathrm{C}}
\renewcommand{\section}[1]{%
  \par\bigskip 
  {\centering \normalfont\Large\bfseries #1\par} 
  \medskip 
}
\begin{document}
\title{On Cellular Automata} 
\author[Initial Surname]{Tawfiq Hamed}
\author[Initial Surname]{Mohammad Saleh}
\date{\today}

\email{1185221@student.birzeit.edu}
\email{msaleh@birzeit.edu}
\maketitle

\let\thefootnote\relax

\begin{abstract}
Cellular automata are a fundamental computational model with applications in mathematics, computer science, and physics. In this work, we explore the study of cellular automata to cases where the universe is a group, introducing the concept of \( \phi \)-cellular automata. We establish new theoretical results, including a generalized Uniform Curtis-Hedlund Theorem and linear \( \phi \)-cellular automata. Additionally, we define the covering map for \( \phi \)-cellular automata and investigate its properties. Specifically, we derive results for quotient covers when the universe of the automaton is a circulant graph. This work contributes to the algebraic and topological understanding of cellular automata, paving the way for future exploration of different types of covers and their applications to broader classes of graphs and dynamical systems.  
\end{abstract} 

\bigskip

\section{Introduction}
\bigskip
A cellular automaton is a discrete model of computation, originally developed in the 1940s by Stanislaw Ulam and John von Neumann, as a simplified mathematical model of biological systems.

In this paper, we aim to explore and understand certain properties of cellular automata through the lenses of group theory and graph theory. Our work is inspired by the seminal monograph by Ceccherini-Silberstein and Coornaert \cite{CeccheriniSilberstein2010} and builds upon the contributions of Castillo-Ramirez \cite{Castillo_Ramirez_2023} and \cite{CastilloRamirez2024} on the generalization of cellular automata, as well as several results from \cite{CeccheriniSilberstein2010}. Additionally, we generalize some of the results from \cite{CeccheriniSilberstein2010}, define a covering map for $\phi$-cellular automata, and prove several interesting results related to this concept.

\section{Preliminaries}
\bigskip

A set $G$ with a closed binary operation $*$ is called a \textbf{semigroup} if it satisfies the \textbf{associative law}: for every $x, y, z \in G$, we have $(x * y) * z = x * (y * z)$. If, in addition, there exists an element $e \in G$ (called the \textbf{identity element}) such that for every $x \in G$, $e * x = x * e = x$, then $G$ is called a \textbf{monoid}.

Furthermore, if every element $x \in G$ has an \textbf{inverse} $x^{-1} \in G$ such that $x * x^{-1} = x^{-1} * x = e$, then $G$ is called a \textbf{group}. We say that $G$ is an \textbf{abelian (commutative) group} if $x * y = y * x \in G$ for all $x, y \in G$. Let $H \neq \emptyset$ be a subset of $G$. If $x, y \in H$, then $xy^{-1} \in H$, and thus, $H$ is called a \textbf{subgroup} of $G$.  
Let $G$ be a group and $H$ a subgroup of $G$. For $x \in G$ the set $xH = \{xh | h \in H\}$ called a \textbf{left coset} and the set $Hx = \{hx | h \in H\}$ called a \textbf{right coset}. And we say that $N$ is a \textbf{normal subgroup} of $G$, denoted by $N \trianglelefteq G$, if for every $g \in G$, it holds that $gN = Ng$. The set of left cosets $G/N$ forms a group, which is called the \textbf{factor group} or \textbf{quotient group}.

Let \( \phi: G \to H \) be a group homomorphism. The \textbf{image} of \( \phi \) is defined as
\[
\text{Im}(\phi) = \{ \phi(g) \mid g \in G \}.
\]

The \textbf{kernel} of \( \phi \) is defined as

\[
\text{Ker}(\phi) = \{ g \in G \mid \phi(g) = 1 \}.
\]

Group actions provide a powerful tool for understanding the structure and properties of groups by interpreting the elements of a group as permutations of a set. They play a key role in the definition of cellular automata with groups. In this section, we will define group actions and explore some examples to illustrate the concept.

\begin{definition}
    A group \( G \) is said to \textbf{act} on a nonempty set \( X \) if there exists a map 
    \[
    \phi: G \times X \to X
    \]
    called a \textbf{group action}, satisfying the following conditions for all \( x \in X \) and \( g, h \in G \): 
    \begin{itemize}
        \item \(\phi(e, x) = x\), where \( e \) is the identity element of \( G \),
        \item \(\phi(g, \phi(h, x)) = \phi(gh, x)\), or equivalently, \( g(hx) = (gh)x \).
    \end{itemize}
    In this case, \( X \) is called a \textbf{\( G \)-set}.
\end{definition}

Let \( \Aut(X) \) (or \( Sym(X) \)) denote the set of all bijections from \( X \) to \( X \). We can observe that \( \Aut(X) \) forms a group under composition. Thus, a group action can be defined as a group homomorphism 
\[
\phi: G \to \Aut(X).
\]

Let $G$ be a group, and let $S \subset G$ be a generating set of $G$ such that $1 \notin S$ and $S = S^{-1}$. Then, the pair $(G, S)$ is called an undirected \textbf{Cayley graph}, denoted by $\Gamma$, where G is the vertex set and $(g,g')$ is an edge if and only if there exists $s \in S$ such that $g' = gs$. And will assume all graphs are \textbf{finite}, \textbf{connected}, \textbf{free loop}, and \textbf{without multiple edges}.

\begin{proposition}\cite[Proposition 10]{Oggier}\label{qg-cyclic}
The quotient of a cyclic group $G$ is cyclic.
\end{proposition}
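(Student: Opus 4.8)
The plan is to prove that any quotient $G/N$ of a cyclic group $G$ is itself cyclic. The key observation is that a quotient group inherits a generating element from the original group in a natural way: if $G$ is generated by a single element, its image under the canonical projection should generate the quotient. So the first step is to fix notation by writing $G = \langle a \rangle$ for some generator $a \in G$, meaning every element of $G$ has the form $a^k$ for some integer $k$. Let $N \trianglelefteq G$ (a subgroup of an abelian, hence cyclic, group is automatically normal, so the quotient is well defined), and let $\pi \colon G \to G/N$ denote the canonical projection $\pi(g) = gN$.

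The central step is to show that the coset $aN = \pi(a)$ generates $G/N$. To carry this out, I would take an arbitrary element of $G/N$, which by definition has the form $gN$ for some $g \in G$. Since $G = \langle a \rangle$, write $g = a^k$ for a suitable integer $k$. Then the coset is $gN = a^k N = (aN)^k$, where the last equality uses that $\pi$ is a homomorphism (equivalently, the coset multiplication rule $(aN)(aN) = a^2 N$ extended inductively to all integer powers, including negative ones via inverses). This exhibits every element of $G/N$ as an integer power of the single coset $aN$, which is exactly the statement that $G/N = \langle aN \rangle$ is cyclic.

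The argument is essentially a one-line consequence of the homomorphism property of the quotient map, so there is no serious obstacle; the only points requiring a modicum of care are purely bookkeeping. First, one should note that the conclusion holds uniformly whether $G$ is finite or infinite cyclic, since the argument never invokes the order of $a$. Second, the passage $a^k N = (aN)^k$ deserves a brief justification for negative exponents, which follows from $(aN)^{-1} = a^{-1}N$ together with the homomorphism property; I would state this in one sentence rather than belabor it. The deepest idea at work is simply that surjective homomorphisms carry generating sets to generating sets, and since a cyclic group has a generating set of size one, so does any of its quotients.
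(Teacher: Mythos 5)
Your proof is correct and complete: showing that the image $aN$ of a generator $a$ under the canonical projection generates $G/N$ is the standard argument. The paper itself does not reproduce a proof (it cites this as Proposition 10 of Oggier's notes), and your argument is essentially the same one found there, so there is nothing to add.
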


\begin{theorem}\cite[Theorem 4.1]{Bhattacharya1994}\label{cyclic-iso-zn}
    Every cyclic group of order n is isomorphic to $\mathbb{Z}_n$ for some $n \in \mathbb{N}.$
\end{theorem}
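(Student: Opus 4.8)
The plan is to produce an explicit isomorphism between a cyclic group $G$ of order $n$ and $\mathbb{Z}_n$ by exploiting the fact that $G$ is, by definition, generated by a single element. First I would fix a generator $g$ of $G$, so that $G = \langle g \rangle = \{e, g, g^2, \dots, g^{n-1}\}$, and record the key structural fact that the order of $g$ is exactly $n$: since $G = \langle g \rangle$ has $n$ elements, we have $g^n = e$ while $g^k \neq e$ for $0 < k < n$. This single observation is what makes the whole argument run.

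Next I would define the candidate map
\[
\psi: \mathbb{Z}_n \to G, \qquad \psi(\overline{k}) = g^k,
\]
and check that it is well defined. If $\overline{k} = \overline{j}$ in $\mathbb{Z}_n$, then $k - j = mn$ for some $m \in \mathbb{Z}$, whence $g^{k-j} = (g^n)^m = e$ and therefore $g^k = g^j$; thus $\psi$ does not depend on the chosen representative.

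Then I would verify that $\psi$ is a homomorphism and a bijection. The homomorphism property is immediate from the exponent law, since $\psi(\overline{k} + \overline{j}) = g^{k+j} = g^k g^j = \psi(\overline{k})\psi(\overline{j})$. Surjectivity is clear because every element of $G$ has the form $g^k = \psi(\overline{k})$. For injectivity I would compute the kernel: if $\psi(\overline{k}) = g^k = e$, then the order of $g$, namely $n$, divides $k$, so $\overline{k} = \overline{0}$; alternatively, a surjection between two finite sets of the same cardinality $n$ is automatically a bijection. Either way $\psi$ is an isomorphism, giving $G \cong \mathbb{Z}_n$.

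I do not expect a genuine obstacle here, as the result is classical; the only point demanding care is establishing that $g$ has order exactly $n$, since both the well-definedness and the injectivity of $\psi$ rest on it. Should one prefer a more structural route, the same conclusion follows from the First Isomorphism Theorem applied to the surjection $\mathbb{Z} \to G$, $k \mapsto g^k$, whose kernel is $n\mathbb{Z}$; but the explicit map above keeps the argument self-contained within the definitions already introduced.
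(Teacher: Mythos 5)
Your proof is correct: the key observation that a generator $g$ of a cyclic group of order $n$ has order exactly $n$ is what makes the map $\overline{k} \mapsto g^k$ well defined and injective, and you handle that point carefully. Note, however, that the paper states this result only as a cited background fact (Theorem 4.1 of Bhattacharya--Jain--Nagpaul) and gives no proof of its own, so there is nothing to compare against; your argument is the standard textbook one and would serve as a self-contained justification.
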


\begin{definition}\cite{toomey2014}
If $G$ is a cyclic group, then the Cayley graph of $G$ defined on some connection set $S$ is called a \textbf{circulant}. We will denote these graphs $\Circ_G$.
\end{definition}

\begin{definition}\label{def-iso}
Let $G,H$ be a graph, the \textbf{graph homomorphism} $\psi: G \rightarrow H$ is a function from $V(G)$ to $V(H)$, such that for any $(u, v) \in E(G)$ implies $(\psi(u), \psi(v)) \in E(H)$ for all $u,v \in V(G)$. If $\psi$ is bijective then $\psi$ is \textbf{graph isomorphism}.
\end{definition}

\begin{definition}\label{graph-hom}
Let $G, H$ be a graph, and there exists a graph homomorphism $\psi: G \rightarrow H$, if $\psi$ is subjective, then $\psi$ is a \textbf{covering map}, and it's \textbf{n-fold} if $\psi$ is n-to-one.
\end{definition}

A topology on a set $X$ is a collection of subsets of $X$, called open sets, that satisfies certain axioms. The smallest topology on $X$, known as the trivial topology, consists only of the empty set $\emptyset$ and $X$. The largest topology on $X$, called the \textbf{discrete topology}, consists of all subsets of $X$ as open sets.

 When each $X_\lambda$ is endowed with the \textbf{discrete topology}, the \textbf{product topology} on $X = \prod_{\lambda \in \Lambda} X_\lambda$ is called the \textbf{prodiscrete topology}.\cite{CeccheriniSilberstein2010}

\begin{proposition}\cite[Proposition B.2.2]{CeccheriniSilberstein2010}\label{uf-con}
Let $X$ and $Y$ be uniform spaces associated with the uniform structures. Then every uniformly continuous map $f: X \rightarrow Y$ is continuous.
\end{proposition}

\begin{definition}\cite{Angluin1981}\label{def-cover}
    Let G and H are graphs and covering map $\psi: G \to H$, then G is a \textbf{covering} of H via $\psi$ if for every $v \in V(G)$, $\psi$ induces a one-to-one correspondence between the vertices adjacent to v in G and the vertices adjacent to $\psi(v)$ in H.
\end{definition}

\section{Cellular Automata and Groups}
\bigskip
In this section, we explore cellular automata through the terminology and concepts of group theory. This section is based on \cite{CeccheriniSilberstein2010}.

In the next definition, we referred to the set \( A \) as the \textbf{alphabet}, with the elements of \( A \) representing the states of the cells in the cellular automaton. These elements are often called \textbf{letters}, \textbf{states}, \textbf{symbols}, or \textbf{colors}. Examples of states include \( \{0, 1\} \), \( \{\text{dead, live}\} \), etc. Throughout this work, we assume that $A$ is finite and denote the cardinality of the set $A$ by $|A|$, with $|A| > 1$.

The group \( G \) is called the \textbf{universe}, with the elements of \( G \) representing the cells in the automaton or representing the vertices of the graph when the cellular automaton is defined over a Cayley graph. We assume \( G \) is a countable group.

Let $\Omega$ be a finite subset of $G$, then the map $p: \Omega \rightarrow G$ is called a \textbf{pattern} (or \textbf{block}) over the group $G$, and the set $\Omega$ is called the \textbf{ support} of $p$.

The set for all maps from $G$ to $A$ is called the set of \textbf{conﬁgurations}, defined by $A^G$ is given by

\[ A^G = \prod_{g \in G} A = \{x: G \to A\}.\]

The \textbf{left action} of $G$ on $A^G$ is given by the map
\[ \psi: G \times A^G \rightarrow A^G \]
defined by
\[ (g,x) \mapsto gx \]
where $g \in G$ and a configuration $x \in A^G$. This action is called the \textbf{G-shift} on $A^G$. 

\begin{definition}\cite[Definition 1.4.1]{CeccheriniSilberstein2010}\label{ca-def}
Let G be a group and let A be a set, a cellular automaton over the group G and the alphabet A is a map $\tau: A^{G} \rightarrow A^{G}$ satisfying the following property: there exists a finite subset $S \subset G$ and a map $\mu: A^{S} \rightarrow A$ such that
\[ \tau(x)(g) = \mu((g^{-1}x)|_S) \]
for all $x \in A^G$ and $g \in G$, where $(g^{-1}x)|_S$ denotes the restriction of the configurations $g^{-1}x$ to S.
Such a set $S$ is called a memory set and $\mu$ is called a local deﬁning map
for $\tau$.
\end{definition}

The set \( S \) is called the memory set of \( \tau \), also referred to as the set of neighborhoods or the window in other literature (For example, let's defined the window as \( 2N + 1 \) if \( N = 1 \), then the window has size 3. For \( i = 0 \), the window is \( \{-1, 0, 1\} \)). 

By definition \ref{ca-def}, the map $p: S \to A$ is a pattern, and the set of all patterns over $S$ is defined by 
\[ A^S = \{p : S \to A\}. \]

It follows that we can define a map $\mu: A^S \to A$, which is called a local map, also known as a block map. By definition \ref{ca-def}, for $x \in A^G$, applying $\tau$ to the configuration $x$ is equivalent to applying the local map $\mu: A^S \to A$ homogeneously and in parallel, using the left action of $G$ on $A^G$

\begin{proposition}\cite[Proposition 1.2.2]{CeccheriniSilberstein2010}\label{act-con}
    The action of G on $A^G$ is continuous.
\end{proposition}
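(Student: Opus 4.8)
The plan is to show that the $G$-shift action $\psi: G \times A^G \to A^G$ is continuous with respect to the prodiscrete topology on $A^G$. The natural strategy is to reduce continuity to the behaviour on a subbasis for the product topology. Recall that when each factor $A$ carries the discrete topology, the prodiscrete topology on $A^G = \prod_{g \in G} A$ has a subbasis consisting of the cylinder sets $C(h, a) = \{x \in A^G : x(h) = a\}$ for $h \in G$ and $a \in A$; equivalently, a basis of open neighborhoods of a configuration $x$ is given by sets of the form $V(x, \Omega) = \{y \in A^G : y|_\Omega = x|_\Omega\}$ where $\Omega$ ranges over finite subsets of $G$. Since a map into a product is continuous if and only if each composition with a projection is continuous, it suffices to check that for each fixed $h \in G$ the map $(g, x) \mapsto (gx)(h)$ is continuous.

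First I would fix a point $(g_0, x_0) \in G \times A^G$ and a basic open neighborhood of its image $g_0 x_0$. By the definition of the $G$-shift, $(g_0 x_0)(h) = x_0(g_0^{-1} h)$, so the value of the output at the coordinate $h$ depends on $x_0$ only through its value at the single coordinate $g_0^{-1} h$. Next I would exhibit an open neighborhood of $(g_0, x_0)$ in the product topology $G \times A^G$ that $\psi$ maps into the prescribed neighborhood. Here I must specify what topology $G$ carries: since $G$ is a (countable, discrete) group, I take $G$ with the discrete topology, so $\{g_0\}$ is open. Then the set $\{g_0\} \times V(x_0, \{g_0^{-1} h\})$ is open in the product, contains $(g_0, x_0)$, and every $(g_0, y)$ in it satisfies $(g_0 y)(h) = y(g_0^{-1} h) = x_0(g_0^{-1} h) = (g_0 x_0)(h)$, which keeps the output in the chosen cylinder. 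Assembling this coordinatewise over the finitely many coordinates appearing in a given basic neighborhood of the output gives the required open neighborhood of $(g_0, x_0)$, establishing continuity at an arbitrary point.

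I expect the main obstacle to be conceptual rather than computational: the statement as phrased speaks of ``the action of $G$ on $A^G$,'' and one must decide whether continuity is asserted for the map $\psi: G \times A^G \to A^G$ of two variables or for each individual shift $x \mapsto gx$ with $g$ fixed. The cleaner and more standard reading is that each shift map $L_g: A^G \to A^G$, $x \mapsto gx$, is continuous (indeed a homeomorphism, with inverse $L_{g^{-1}}$), and I would emphasize that route because it avoids any ambiguity about the topology on $G$. For fixed $g$, continuity of $L_g$ follows immediately from the cylinder-set argument above: the preimage under $L_g$ of the cylinder $C(h,a)$ is exactly the cylinder $C(g^{-1}h, a)$, which is open, so $L_g$ is continuous; applying the same to $g^{-1}$ shows $L_g$ is a homeomorphism. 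If instead the joint continuity of $\psi$ is intended, the discreteness of $G$ makes the two formulations equivalent, so either reading is handled by the same coordinatewise computation. I would present the fixed-$g$ version as the primary argument and remark that joint continuity then follows from discreteness of $G$.
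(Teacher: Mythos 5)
Your argument is correct and is essentially the standard proof of this fact: the paper itself states this proposition without proof as a citation of \cite[Proposition 1.2.2]{CeccheriniSilberstein2010}, and the proof given there is exactly your cylinder-set computation showing that for each fixed $g$ the preimage of a basic neighborhood $V(gx,\Omega)$ is the basic neighborhood $V(x,g^{-1}\Omega)$, so that each shift $x \mapsto gx$ is a homeomorphism. Your additional remark that joint continuity of $\psi: G \times A^G \to A^G$ follows from the discreteness of $G$ is also correct and resolves the only ambiguity in the statement.
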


Other interesting properties of the space \( A^G \) are that it is Hausdorff and totally disconnected, as will be shown in the next proposition. Additionally, for cellular automata over finite alphabets \( A \), there is a very useful topological property: since the product of finite spaces is compact, \( A^G \) is also compact.

\begin{proposition}\cite[Proposition 1.2.1]{CeccheriniSilberstein2010}\label{ag-prop}
    Let \( G \) be a group and \( A \) a set of alphabet. Then the following properties hold for the space \( A^G \):
    \begin{enumerate}
        \item \( A^G \) is Hausdorff.
        \item \( A^G \) is totally disconnected.
        \item \( A^G \) is compact if \( A \) is finite.
    \end{enumerate}
\end{proposition}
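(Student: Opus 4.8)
The plan is to exploit the fact that $A^G = \prod_{g \in G} A$ carries the prodiscrete topology, that is, the product topology in which each factor $A$ is given the discrete topology. All three assertions then reduce to verifying the corresponding property for the single discrete space $A$ and invoking the appropriate product theorem. First I would fix notation: for each $g \in G$ let $\pi_g : A^G \to A$ denote the canonical projection $\pi_g(x) = x(g)$, which is continuous by the definition of the product topology, and recall that the cylinder sets
\[
C(g,a) = \pi_g^{-1}(\{a\}) = \{ x \in A^G : x(g) = a \}, \qquad g \in G,\ a \in A,
\]
together with their finite intersections, form a base for the topology on $A^G$. Since $A$ is discrete, each singleton $\{a\}$ is clopen in $A$, so each $C(g,a)$ is clopen in $A^G$.

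For (1), given distinct $x, y \in A^G$ there is some $g \in G$ with $x(g) \neq y(g)$; then $C(g, x(g))$ and $C(g, y(g))$ are disjoint open sets separating $x$ and $y$, so $A^G$ is Hausdorff. For (2), I would show directly that the only connected subsets are singletons: if $C \subseteq A^G$ contains two distinct points $x \neq y$, pick $g$ with $x(g) \neq y(g)$; then $C(g,x(g)) \cap C$ is a clopen subset of $C$ in the relative topology, it contains $x$ but not $y$, and hence it is a nonempty proper clopen subset, exhibiting a disconnection of $C$. Thus every connected component is a single point and $A^G$ is totally disconnected. Equivalently, the clopen cylinder base displays $A^G$ as a zero-dimensional Hausdorff space.

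For (3), when $A$ is finite it is compact in the discrete topology, since every finite topological space is compact. By Tychonoff's theorem an arbitrary product of compact spaces is compact in the product topology, and therefore $A^G = \prod_{g \in G} A$ is compact.

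The genuinely substantive ingredient is the compactness in (3): parts (1) and (2) are settled by elementary coordinatewise arguments through the projections $\pi_g$, whereas (3) rests on Tychonoff's theorem, whose general form (for an arbitrary, possibly uncountable, index set $G$) invokes the axiom of choice. Since $G$ is here only assumed countable, one could instead give a self-contained argument via the finite intersection property applied to the clopen cylinder base, or via sequential compactness and a diagonal extraction; but appealing to Tychonoff keeps the proof short and uniform.
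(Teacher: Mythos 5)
Your proof is correct and is the standard argument: the paper itself gives no proof of this proposition, merely citing \cite[Proposition 1.2.1]{CeccheriniSilberstein2010}, and your coordinatewise separation by clopen cylinder sets for (1) and (2) together with Tychonoff for (3) is exactly the argument given in that reference. The only nitpick is that the sets $C(g,a)$ alone form a subbase rather than a base (their finite intersections form the base, as you in fact say), and your closing remark that countability of $G$ and finiteness of $A$ permit a choice-free diagonal argument is a correct and worthwhile observation.
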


\section{$\phi$-Cellular Automaton}
\bigskip
In this section, we present the main results from Castillo-Ramírez's works \cite{Castillo_Ramirez_2023} and \cite{CastilloRamirez2024}, and extend some results from \cite{CeccheriniSilberstein2010}, building upon their definitions and results.

As we saw in Definition \ref{ca-def}, the cellular automaton \( \tau \) is a map from some shift space \( A^G \) to the same space. The question is, if we have two discrete spaces with different groups, and these groups have a homomorphism, then we can define a new map between the two prodiscrete spaces or the shift spaces. How can we define this map? What are the properties of this map? In this section, we will explore the answers to these questions.

\begin{definition}\cite{Castillo_Ramirez_2023}\label{gca-def}
For any $\phi \in \Hom(H,G)$, a $\phi$-\emph{cellular automaton} from $A^G$ to $A^H$ is a function $\tau : A^G \to A^H$ such that there is a finite subset $S \subseteq G$, called a \emph{memory set} of $\tau$, and a \emph{local function} $\mu : A^S \to A$ satisfying
    \[ \tau(x)(h) = \mu (( \phi(h^{-1}) \cdot x) \vert_{S}),  \quad \forall x \in A^G, h \in H.  \]
\end{definition}

We can observe that Definition \ref{ca-def} is equivalent to Definition \ref{gca-def} when $\phi = \text{Id}_G$. Thus, an $\text{Id}_G$-cellular automaton corresponds to the classical cellular automaton.

\begin{lemma}\cite[Lemma 1]{Castillo_Ramirez_2023}\label{le-equivariance} 
Let $G,H$ be a groups, $A$ be a set, and $\phi \in \Hom(H,G)$, Then every $\phi$-cellular automaton is $\phi$-equivariant.
\end{lemma}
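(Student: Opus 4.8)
The plan is to unwind the definitions carefully. First I need to recall what "$\phi$-equivariant" should mean in this context. Since $G$ acts on $A^G$ (the $G$-shift) and $H$ acts on $A^H$ (the $H$-shift), and we have a homomorphism $\phi \in \Hom(H,G)$, the natural notion is that $\tau$ intertwines the $H$-action on $A^H$ with the pulled-back $H$-action on $A^G$ induced by $\phi$. Concretely, $H$ acts on $A^G$ via $h \cdot x := \phi(h) x$, and the claim should be that $\tau(h \cdot x) = h \cdot \tau(x)$ for all $h \in H$ and $x \in A^G$; that is, $\tau(\phi(h) x) = h\,\tau(x)$.

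Let me think about this more carefully. The configuration $\tau(x) \in A^H$ is a function on $H$. I'd verify equivariance by evaluating both sides at an arbitrary point $k \in H$.

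Let me set up the computation. Fix $h, k \in H$ and $x \in A^G$. The plan is to compute $\tau(\phi(h) x)(k)$ using the defining formula and show it equals $(h\,\tau(x))(k) = \tau(x)(h^{-1}k)$ (using the standard left-shift convention $(h \cdot y)(k) = y(h^{-1}k)$ for $y \in A^H$). Applying Definition~\ref{gca-def}, $\tau(\phi(h)x)(k) = \mu\big((\phi(k^{-1}) \cdot (\phi(h) x))|_S\big)$. The key algebraic step is that $\phi$ is a homomorphism, so $\phi(k^{-1})\phi(h) = \phi(k^{-1}h) = \phi((h^{-1}k)^{-1})$, and the two group actions compose: $\phi(k^{-1}) \cdot (\phi(h) \cdot x) = (\phi(k^{-1})\phi(h)) \cdot x = \phi((h^{-1}k)^{-1}) \cdot x$. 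Then $\tau(\phi(h)x)(k) = \mu\big((\phi((h^{-1}k)^{-1}) \cdot x)|_S\big)$, which is exactly $\tau(x)(h^{-1}k)$ by the defining formula with $h^{-1}k$ in place of $h$. This matches $(h \cdot \tau(x))(k)$, completing the verification.

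The main thing to get right is bookkeeping rather than a genuine obstacle: one must be careful that $\phi$ being a homomorphism is used twice, once to turn $\phi(k^{-1})\phi(h)$ into a single $\phi$-value, and implicitly through the associativity of the group action (the second axiom in the definition of a group action) to collapse the two successive shifts into one. I would state the equivariance convention explicitly at the outset to avoid any ambiguity about left versus right actions, then carry out the pointwise evaluation at $k \in H$ as above. Since the identity holds for every $k$, the two configurations $\tau(\phi(h)x)$ and $h\,\tau(x)$ agree as elements of $A^H$, and since $h$ and $x$ were arbitrary, $\tau$ is $\phi$-equivariant.
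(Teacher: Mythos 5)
Your proof is correct and is essentially the standard argument for this lemma (which the paper cites from Castillo-Ramirez et al.\ without reproducing the proof): unwind the definition of a $\phi$-cellular automaton, evaluate both configurations at an arbitrary $k \in H$, and use that $\phi$ is a homomorphism together with the action axiom to identify $\phi(k^{-1})\phi(h) = \phi(k^{-1}h) = \phi((h^{-1}k)^{-1})$. Your explicit statement of the equivariance convention $\tau(\phi(h)x) = h\,\tau(x)$ is the right one, and the bookkeeping checks out.
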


\begin{lemma}\cite[Lemma 2]{Castillo_Ramirez_2023}\label{le-continuity}
        Let $G,H$ be a groups, $A$ be a set, and $\phi \in \Hom(H,G)$, Then every $\phi$-cellular automaton is continuous.
\end{lemma}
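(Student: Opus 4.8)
The plan is to reduce continuity of $\tau : A^G \to A^H$ to a coordinatewise statement, using that the prodiscrete topology on $A^H$ is by definition the product of discrete topologies. Concretely, a map into a product is continuous exactly when each of its components is, so it suffices to show that for every $h \in H$ the evaluation $x \mapsto \tau(x)(h)$ is continuous from $A^G$ to the discrete space $A$. The whole argument then rests on the observation that this evaluation depends on $x$ only through finitely many of its coordinates.

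First I would unwind the defining formula $\tau(x)(h) = \mu\bigl((\phi(h^{-1}) \cdot x)\vert_S\bigr)$. Using that $\phi$ is a homomorphism, so $\phi(h^{-1}) = \phi(h)^{-1}$, together with the $G$-shift convention $(g x)(g') = x(g^{-1} g')$, one gets $(\phi(h^{-1}) \cdot x)(s) = x(\phi(h) s)$ for each $s \in S$. Hence $\tau(x)(h)$ is determined entirely by the restriction of $x$ to the finite window $\phi(h) S = \{\phi(h) s : s \in S\} \subseteq G$. I would make this precise by introducing the map $\rho_h : A^G \to A^S$ defined by $\rho_h(x)(s) = x(\phi(h) s)$ and noting that $\pi_h \circ \tau = \mu \circ \rho_h$, where $\pi_h : A^H \to A$ is the $h$-th coordinate projection.

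Next I would check that each factor in this composition is continuous. Every coordinate of $\rho_h$ is an evaluation $x \mapsto x(\phi(h) s)$, which is a canonical projection of the product $A^G$ and hence continuous; by the universal property of the product topology, $\rho_h$ is therefore continuous. Since $S$ is finite and $A$ carries the discrete topology, the finite product $A^S$ is itself discrete, so the local map $\mu : A^S \to A$ is automatically continuous. Thus $\pi_h \circ \tau$ is continuous for every $h \in H$, and continuity of $\tau$ follows.

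The only delicate point, and the step I would be most careful about, is the bookkeeping with the shift action, namely verifying that $(\phi(h^{-1}) \cdot x)\vert_S$ really is a function of $x\vert_{\phi(h)S}$ alone; everything else is formal. As an alternative route one could instead show that $\tau$ is uniformly continuous with respect to the natural uniform structures on $A^G$ and $A^H$ and then invoke Proposition \ref{uf-con}, but the coordinatewise argument above is more elementary and avoids setting up the uniform structures explicitly.
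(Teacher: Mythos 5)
Your argument is correct: the identity $(\phi(h^{-1})\cdot x)(s) = x(\phi(h)s)$ holds under the standard left-shift convention, so each coordinate map $\pi_h\circ\tau$ factors through the finite (hence discrete) product $A^{\phi(h)S}$, and continuity follows from the universal property of the product topology. Note that the paper itself does not prove this lemma --- it is quoted from the cited reference --- but your coordinatewise argument is essentially the standard proof given there, and it is consistent with the uniform-continuity strengthening the paper establishes later in Theorem \ref{Curtis-Hedlund-Uniformly}, where the key estimate is likewise that $x$ and $y$ agreeing on $\phi(\Omega)S$ forces $\tau(x)$ and $\tau(y)$ to agree on $\Omega$.
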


\begin{lemma}\cite[Lemma 3]{Castillo_Ramirez_2023}\label{le-constant}
Let $S$ be a finite subset of $G$ and $\phi \in \Hom(H,G)$. A $\phi$-equivariant function $\tau : A^G \to A^H$ is a $\phi$-cellular automaton with memory set $S$ if and only if the function $p_{e_G} \circ \tau : A^G \to A$ is constant on $V(x,S)$, for every $x \in A^G$.
\end{lemma}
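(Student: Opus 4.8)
The plan is to prove the two implications separately, after fixing the meaning of the objects involved. Recall that the basic prodiscrete neighborhood is $V(x,S) = \{y \in A^G : y\vert_S = x\vert_S\}$, the set of configurations agreeing with $x$ on $S$, and that $p_{e_G}\circ\tau$ is the map sending $x$ to the value of $\tau(x)$ at the identity of $H$, i.e. $x \mapsto \tau(x)(e_H)$. I will also use the explicit form of the shift actions, $(g\cdot x)(g') = x(g^{-1}g')$ on $A^G$ and $(h\cdot y)(h') = y(h^{-1}h')$ on $A^H$, together with the $\phi$-equivariance of $\tau$, namely $\tau(\phi(h)\cdot x) = h\cdot\tau(x)$ for all $h \in H$ and $x \in A^G$ (this is the relation recorded in Lemma \ref{le-equivariance}).

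For the forward implication, I would suppose $\tau$ is a $\phi$-cellular automaton with memory set $S$ and local function $\mu$. Evaluating the identity of Definition \ref{gca-def} at $h = e_H$ and using $\phi(e_H^{-1}) = \phi(e_H) = e_G$ and $e_G\cdot x = x$, one gets $(p_{e_G}\circ\tau)(x) = \tau(x)(e_H) = \mu((\phi(e_H^{-1})\cdot x)\vert_S) = \mu(x\vert_S)$. Since the right-hand side depends on $x$ only through $x\vert_S$, and any two elements of $V(x,S)$ share the same restriction to $S$, the map $p_{e_G}\circ\tau$ is constant on each $V(x,S)$.

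For the converse, I would assume $\tau$ is $\phi$-equivariant and that $p_{e_G}\circ\tau$ is constant on every $V(x,S)$. The constancy lets me define a local function $\mu : A^S \to A$ by $\mu(q) = (p_{e_G}\circ\tau)(y)$, where $y \in A^G$ is any configuration with $y\vert_S = q$; this is well defined precisely because all such $y$ lie in a single set $V(y,S)$ on which $p_{e_G}\circ\tau$ is constant. It then remains to check the formula of Definition \ref{gca-def} with this $S$ and $\mu$. The key reduction is the identity $\tau(x)(h) = \tau(\phi(h^{-1})\cdot x)(e_H)$: indeed $\tau(x)(h) = (h^{-1}\cdot\tau(x))(e_H)$ by the shift formula, and $h^{-1}\cdot\tau(x) = \tau(\phi(h^{-1})\cdot x)$ by $\phi$-equivariance. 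Applying the definition of $\mu$ to the configuration $\phi(h^{-1})\cdot x$ then yields $\tau(x)(h) = (p_{e_G}\circ\tau)(\phi(h^{-1})\cdot x) = \mu((\phi(h^{-1})\cdot x)\vert_S)$, which is exactly the $\phi$-cellular automaton condition.

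I expect the main obstacle to be the backward direction, specifically the bookkeeping in the reduction $\tau(x)(h) = \tau(\phi(h^{-1})\cdot x)(e_H)$, where one must track the two shift conventions and the direction of $\phi$ carefully, since it is easy to drop an inverse or to land at $\phi(h)$ rather than $\phi(h^{-1})$. The well-definedness of $\mu$ is conceptually the heart of the argument, but it is immediate once the cylinder interpretation of $V(x,S)$ is in place; the equivariance hypothesis then does the remaining work by transporting the single rule \enquote{value at $e_H$} homogeneously to every $h \in H$.
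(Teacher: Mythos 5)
Your proof is correct and follows the standard route: the paper itself gives no proof of this lemma but imports it from \cite[Lemma 3]{Castillo_Ramirez_2023}, and your two directions (evaluating the defining identity at the identity of $H$ to see that $p_{e_G}\circ\tau$ factors through $x\vert_S$, then using constancy on the cylinders $V(x,S)$ to well-define $\mu$ and the equivariance reduction $\tau(x)(h)=\tau(\phi(h^{-1})\cdot x)(e_H)$ to recover the local rule everywhere) are exactly the argument of that reference. The only point worth flagging is notational: the projection in the statement is written $p_{e_G}$ but must be read as evaluation of $\tau(x)\in A^H$ at $e_H$, which is how you correctly interpret it.
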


As shown in \cite{Castillo_Ramirez_2023}, we can construct a $\phi$-cellular automaton, as it demonstrates that for every $\phi \in \Hom(H,G)$, we can define $\phi^{\star} : A^G \to A^H$ by
\begin{align}\label{star}
     \phi^\star(x) = x \circ \phi, \quad \forall x \in A^G.
\end{align}
So, we can construct \( \phi^\star \) as a \( \phi \)-cellular automaton with the memory set \( S = \{ e_G \} \) and the local function \( \mu = id_A \). It is observed that for any \( h \in H \),
\[
\phi^\star(x)(h) = x \circ \phi(h) = (\phi(h^{-1}) \cdot x)(e_G),
\]
where \( e_G \) is the identity element of \( G \).

\begin{lemma}\cite[Lemma 2]{CastilloRamirez2024}\label{le-star}
Let \( G \) and \( H \) be groups, and let \( \phi \in \Hom(H, G) \). Then the following holds:
\begin{enumerate}
\item Let $\psi \in \Hom(H,G)$, then
\[ \phi^{\ast}=\psi^{\ast} \quad \Leftrightarrow \quad \phi=\psi. \]
\item For any group $K$ and any $\psi \in \Hom(K,H)$, we have
\[ (\psi \circ \phi)^* = \phi^* \circ \psi^*. \]
\item $\phi$ is surjective if and only if $\phi^{\ast}$ is injective.
\item $\phi$ is injective if and only if $\phi^{\ast}$ is surjective.
\end{enumerate}
\end{lemma}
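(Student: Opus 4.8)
The plan is to reduce every assertion to the pointwise identity $\phi^\star(x)(h) = x(\phi(h))$ furnished by \eqref{star}, and to exploit the standing hypothesis $|A| > 1$ each time I need to manufacture a configuration that separates two distinct elements of a group. I would dispose of part (2) first, since it is a pure computation: for any group $K$, any $\psi \in \Hom(K,H)$, any $x \in A^G$, and any $k \in K$, both sides evaluate to $x(\phi(\psi(k)))$, so the two maps agree on every configuration at every point and therefore coincide. Here the composite is formed so that domains and codomains match, namely $\phi^\star \colon A^G \to A^H$ followed by $\psi^\star \colon A^H \to A^K$, which is exactly what forces the contravariant order appearing in the statement.

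For part (1), the implication $\phi = \psi \Rightarrow \phi^\star = \psi^\star$ is immediate from the definition, so I would argue the converse contrapositively: if $\phi(h_0) \neq \psi(h_0)$ for some $h_0 \in H$, then because $|A| > 1$ I can choose $x \in A^G$ with $x(\phi(h_0)) \neq x(\psi(h_0))$, whence $\phi^\star(x)(h_0) \neq \psi^\star(x)(h_0)$ and $\phi^\star \neq \psi^\star$. Part (3) follows the same template: if $\phi$ is surjective and $\phi^\star(x) = \phi^\star(y)$, then $x$ and $y$ agree on all of $\mathrm{Im}(\phi) = G$, so $x = y$; conversely, if $\phi$ misses some $g_0 \in G$, I would take $x,y$ that agree off $g_0$ and differ at $g_0$ (possible since $|A| > 1$), obtaining $x \neq y$ but $\phi^\star(x) = \phi^\star(y)$, so $\phi^\star$ is not injective.

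Part (4) is where I expect the only genuine care to be needed. For the forward direction, given $z \in A^H$ I must produce a preimage $x \in A^G$ with $x(\phi(h)) = z(h)$ for all $h$; injectivity of $\phi$ is precisely what makes the prescription $x(\phi(h)) := z(h)$ well defined on $\mathrm{Im}(\phi)$, and I then extend $x$ to the rest of $G$ by any fixed value of $A$. Conversely, if $\phi$ fails to be injective, say $\phi(h_1) = \phi(h_2)$ with $h_1 \neq h_2$, then every configuration in the image of $\phi^\star$ takes equal values at $h_1$ and $h_2$; since $|A| > 1$ there is a $z \in A^H$ violating this, so $\phi^\star$ is not surjective. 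The main obstacle, such as it is, lies in keeping the contravariance straight and in checking the well-definedness of the extension in the surjective case; once those bookkeeping points are settled, the separating-configuration argument, powered by $|A| > 1$, handles each remaining implication uniformly.
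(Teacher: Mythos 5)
Your proof is correct. The paper itself gives no argument for this lemma --- it is imported verbatim from Castillo-Ramirez and Santos Ba\~nos \cite{CastilloRamirez2024} --- so there is nothing to compare against; your reduction of every part to the pointwise identity $\phi^\star(x)(h)=x(\phi(h))$, together with the separating-configuration trick using $|A|>1$ for the four ``only if'' directions and the well-definedness check in part (4), is exactly the standard argument and is complete. Your domain/codomain bookkeeping in part (2) is also worth keeping: it shows that the displayed identity $(\psi\circ\phi)^{*}=\phi^{*}\circ\psi^{*}$ only parses if the compositions are read left-to-right (the composite homomorphism is $K\to H\to G$ and the composite of the induced maps is $A^G\to A^H\to A^K$), a point the statement as printed leaves implicit.
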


\section{Decomposition of $\phi$-cellular automata}
\bigskip
In \cite[Theorem 2]{Castillo_Ramirez_2023} proved that the composition of two generalized cellular automata is also generalized cellular automata, so, the question arises: if we have a \( \phi \)-cellular automaton where \( \phi \in \Hom(H, G) \), can it be decomposed into two generalized cellular automata?

\begin{proposition}\cite[Corollary 12.4]{Gorodentsev2016}\label{de-hom}(Decomposition of Homomorphisms)
    A group homomorphism $\phi: G \to H$ can be factored as a composition of a quotient epimorphism
    \[ G \to G/\ker(\phi)\]
    followed by an injective homomorphism
    \[ G/\ker(\phi) \to H \]
    sending the coset $g \ker(\phi) \in G/\ker(\phi)$ to $\phi(g) \in H$. In particular, $Im(\phi) \cong G/\ker(\phi)$.
\end{proposition}

By this proposition we can conclude the first result in our work

\begin{theorem}
    A \( \phi \)-cellular automaton, where \( \phi \in \Hom(H, G) \), can be factored as a composition of two generalized cellular automata.
\end{theorem}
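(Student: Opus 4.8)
The plan is to read off the factorization directly from Proposition~\ref{de-hom} together with Definition~\ref{gca-def}. Write $Q = H/\ker(\phi)$ and apply Proposition~\ref{de-hom} to $\phi \in \Hom(H,G)$: this factors $\phi$ as $\phi = \iota \circ \pi$, where $\pi : H \to Q$ is the quotient epimorphism $h \mapsto h\ker(\phi)$ and $\iota : Q \to G$ is the injective homomorphism $h\ker(\phi) \mapsto \phi(h)$. Since $\iota \in \Hom(Q,G)$ and $\pi \in \Hom(H,Q)$, each of these homomorphisms gives rise to generalized cellular automata between the corresponding configuration spaces, and the goal is to distribute the memory set $S$ and the local function $\mu$ of the given $\phi$-cellular automaton among two such automata so that their composition recovers it.

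Next I would define the two factors explicitly. Let $S \subseteq G$ and $\mu : A^S \to A$ be a memory set and local function for the given $\phi$-cellular automaton $\tau : A^G \to A^H$. Define $\tau_1 : A^G \to A^Q$ to be the $\iota$-cellular automaton with the same memory set $S$ and the same local function $\mu$, namely
\[ \tau_1(x)(q) = \mu\bigl( (\iota(q^{-1}) \cdot x)\vert_S \bigr), \quad \forall x \in A^G,\ q \in Q; \]
this is well defined precisely because $\iota$ takes values in $G$, so the restriction to $S \subseteq G$ makes sense. For the second factor, take $\tau_2 = \pi^\star : A^Q \to A^H$, the $\pi$-cellular automaton with memory set $\{e_Q\}$ and local function $\mathrm{id}_A$ furnished by the construction in \eqref{star}. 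By Definition~\ref{gca-def}, both $\tau_1$ and $\tau_2$ are generalized cellular automata.

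Finally I would verify the factorization $\tau = \tau_2 \circ \tau_1$ by a direct computation on configurations. For $x \in A^G$ and $h \in H$,
\[ (\tau_2 \circ \tau_1)(x)(h) = \pi^\star(\tau_1(x))(h) = \tau_1(x)(\pi(h)) = \mu\bigl( (\iota(\pi(h^{-1})) \cdot x)\vert_S \bigr), \]
and since $\phi = \iota \circ \pi$ gives $\iota(\pi(h^{-1})) = \phi(h^{-1})$, the right-hand side equals $\mu\bigl((\phi(h^{-1}) \cdot x)\vert_S\bigr) = \tau(x)(h)$, as required. The step I expect to be the main obstacle is not the computation itself but the bookkeeping of directions: the assignment $\phi \mapsto \phi^\star$ is contravariant (it sends a homomorphism $H \to G$ to a map $A^G \to A^H$), so one must be careful that the quotient part $\pi$ produces the second factor while the injective part $\iota$ produces the first, and that the memory set $S \subseteq G$ is carried by the factor whose target indexing group is $G$. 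Matching the local function and memory set to the correct factor, and checking that $\tau_1$ as defined is genuinely a well-defined $\iota$-cellular automaton rather than merely an abstract map, is where the argument needs the most care.
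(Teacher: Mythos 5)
Your proposal is correct, and it uses the same underlying decomposition as the paper (the factorization $\phi = \iota \circ \pi$ through $H/\ker(\phi)$ from Proposition \ref{de-hom}), but the way you assemble the two factors is genuinely different and, in fact, stronger. The paper takes both factors to be the canonical automata $\psi_1^\star$ and $\psi_2^\star$ and then concludes via Lemma \ref{le-star} that $(\psi_2 \circ \psi_1)^\star = \tau$; that identity only holds when $\tau$ happens to equal $\phi^\star$, i.e.\ when $\tau$ has memory set $\{e_G\}$ and local function $\mathrm{id}_A$, so the paper's argument really only factors the single automaton $\phi^\star$ rather than an arbitrary $\phi$-cellular automaton. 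You avoid this by carrying the given memory set $S$ and local function $\mu$ into the first factor $\tau_1$ (the $\iota$-cellular automaton $\tau_1(x)(q) = \mu((\iota(q^{-1})\cdot x)\vert_S)$, well defined since $\iota$ lands in $G$) and using $\pi^\star$ only for the second factor; the direct computation $(\pi^\star \circ \tau_1)(x)(h) = \mu((\iota(\pi(h^{-1}))\cdot x)\vert_S) = \mu((\phi(h^{-1})\cdot x)\vert_S) = \tau(x)(h)$ then verifies the factorization for every $\phi$-cellular automaton $\tau$, which is what the theorem as stated requires. Your bookkeeping of the contravariance (quotient part $\pi$ yielding the last-applied factor, injective part $\iota$ carrying $S \subseteq G$) is exactly right, and your version should be preferred to the one in the paper.
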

\begin{proof}
    Let \( \tau: A^G \to A^H \) be a $\phi$-cellular automaton where $\phi \in \Hom(H,G)$, and assume $\ker(\phi) = K$. By Decomposition of Homomorphisms (Proposition \ref{de-hom}), we can factor \( \phi \) to 
    \[ \psi_1: H \to H/K \]
    and \[ \psi_2: H/K \to G \]
    such that 
    \[ \phi = \psi_2 \circ \psi_1 \].

    Now, consider a map \( \psi_1^* = x \circ \psi_1 \) for $x \in A^H$, and \( \psi_2^* = y \circ \psi_2 \) for $y \in A^{H/K}$. As we know by equation \ref{star} the \( \psi_1^* \) is a \( \psi_1 \)-cellular automaton, and the \( \psi_2^* \) is a \( \psi_2 \)-cellular automaton.

    However, \( \psi_1^* \circ \psi_2^* = (\psi_2 \circ \psi_1)^* \) is a \((\psi_2 \circ \psi_1) \)-cellular automaton by \cite[Theorem 2]{Castillo_Ramirez_2023}. Since \( \phi = \psi_2 \circ \psi_1 \), we can conclude by lemma \ref{le-star} \((\psi_2 \circ \psi_1)^* = \tau\).
\end{proof}

\section{Uniform Curtis–Hedlund Theorem}
\bigskip
The Curtis–Hedlund theorem has been generalized for prodiscrete uniform structures in \cite{CeccheriniSilberstein2010} and \cite{CECCHERINISILBERSTEIN2008225}. Now, we further extend this generalization to $\phi$-cellular automata, where $\tau: A^G \rightarrow A^H$ and $\phi: H \rightarrow G$.

\begin{theorem}\label{Curtis-Hedlund-Uniformly}
Let A be a set, G and H be groups, and $\phi \in \Hom(H,G)$. Let $\tau: A^G \rightarrow A^H$ be a map and equip $A^G$ with its prodiscrete uniform structure. Then the following conditions are equivalent:
\begin{itemize}
    \item $\tau$ is a $\phi$-cellular automaton.
    \item $\tau$ is uniformly continuous and $\phi$-equivariant.
\end{itemize}
\end{theorem}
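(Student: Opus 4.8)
The plan is to prove the two implications separately, leaning on the $\phi$-equivariance and continuity results already established (Lemmas \ref{le-equivariance} and \ref{le-continuity}) together with the structural characterisation in Lemma \ref{le-constant}. Throughout I would work with the explicit base of entourages for the prodiscrete uniform structure: on $A^G$ the sets $W_\Omega = \{(x,y) \in A^G \times A^G : x|_\Omega = y|_\Omega\}$ indexed by the finite subsets $\Omega \subseteq G$, and analogously on $A^H$. Since every entourage contains a basic one, uniform continuity may be both tested against and witnessed by these basic entourages, and this reduction is the pivot on which both directions turn.

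For the forward direction ($\phi$-cellular automaton $\Rightarrow$ uniformly continuous and $\phi$-equivariant), equivariance is immediate from Lemma \ref{le-equivariance}, so only uniform continuity needs work. Let $S$ be a memory set and $\mu$ a local function for $\tau$. The key computation is that, using the left action $(g\cdot x)(g') = x(g^{-1}g')$ and $\phi(h^{-1}) = \phi(h)^{-1}$, we have $(\phi(h^{-1})\cdot x)(s) = x(\phi(h)s)$, so $\tau(x)(h) = \mu((\phi(h^{-1})\cdot x)|_S)$ depends on $x$ only through its restriction to the finite set $\phi(h)S$. Given a basic target entourage $W_\Omega$ with $\Omega \subseteq H$ finite, I would set $\Omega' = \bigcup_{h \in \Omega} \phi(h)S \subseteq G$, which is finite. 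Then $x|_{\Omega'} = y|_{\Omega'}$ forces $\tau(x)(h) = \tau(y)(h)$ for every $h \in \Omega$, i.e.\ $(x,y) \in W_{\Omega'}$ implies $(\tau(x),\tau(y)) \in W_\Omega$. As $\Omega'$ is chosen independently of the pair $(x,y)$, this is precisely uniform continuity.

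For the reverse direction (uniformly continuous and $\phi$-equivariant $\Rightarrow$ $\phi$-cellular automaton), the strategy is to manufacture the memory set directly from the uniform structure and then invoke Lemma \ref{le-constant}. I would apply uniform continuity to the basic target entourage $W_{\{e_H\}} = \{(u,v) \in A^H \times A^H : u(e_H) = v(e_H)\}$ to obtain a basic entourage $W_S$ of $A^G$, with $S \subseteq G$ finite, such that $(x,y) \in W_S$ implies $(\tau(x),\tau(y)) \in W_{\{e_H\}}$. Unwinding the definitions, this says exactly that $x|_S = y|_S$ implies $\tau(x)(e_H) = \tau(y)(e_H)$; that is, the composite $p_{e_G}\circ\tau$ of Lemma \ref{le-constant} (evaluation of $\tau(x)$ at the identity) is constant on each set $V(x,S) = \{y : y|_S = x|_S\}$. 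Since $\tau$ is assumed $\phi$-equivariant, Lemma \ref{le-constant} then certifies that $\tau$ is a $\phi$-cellular automaton with memory set $S$.

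The main obstacle is the reverse direction, and it is conceptual rather than computational: the entire weight of the theorem rests on the fact that the basic entourages of the prodiscrete uniform structure are indexed by finite subsets of $G$ \emph{uniformly across all configurations}. This is what allows a single application of uniform continuity to produce one finite set $S$ serving every $x$ at once; plain continuity (Lemma \ref{le-continuity}) would only yield, for each fixed $x$, a neighbourhood depending on $x$, and would not deliver a global finite memory set. I would therefore be careful to record the base of entourages explicitly and to justify the reduction to basic entourages before extracting $S$, since this uniformity is exactly the feature that distinguishes the present statement from the weaker continuity lemma.
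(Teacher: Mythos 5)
Your proposal is correct and follows essentially the same route as the paper: the forward direction extracts uniform continuity from the inclusion $(\tau\times\tau)(W_{\Omega'})\subseteq W_\Omega$ with $\Omega'=\bigcup_{h\in\Omega}\phi(h)S$, and the reverse direction applies uniform continuity to the single entourage $W_{\{e_H\}}$ to obtain a global finite memory set $S$. The only difference is presentational: where the paper finishes the converse by writing out the local map via $\phi$-equivariance (leaving $\mu$ implicit), you delegate that step to Lemma \ref{le-constant}, which is if anything a slightly cleaner way to certify the existence of the local function.
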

\begin{proof}
    First, we will show that $\tau$ is uniformly continuous.
    Let $\tau: A^G \rightarrow A^H$ is a $\phi$-cellular automaton and $\phi \in \Hom(H,G)$. Let $S \subset G$ be a memory set, and $\mu: A^S \rightarrow A$ is a local map. Now, for $x,y \in A^G$, let $h \in \Omega \subset H$,we have:
    \begin{eqnarray*}
        \tau(x)(h) &=& \mu (( \phi(h^{-1}) \cdot x) \vert_{S}) \\
        &=&  \mu (( x(\phi(h))) \vert_{S}) \\
        &=&  \mu (( y(\phi(h))) \vert_{S}) \\
        &=& \mu (( \phi(h^{-1}) \cdot y) \vert_{S}) \\
        &=& \tau(y)(h).
    \end{eqnarray*}
    So, $x$ and $y$ coincide on $\Omega S = \{\phi(h)s : h \in \Omega, s \in S\}$, it follows that $\tau(x)$ and $\tau(y)$ coincide on $\Omega$. Then we deduce that:
    \[ ( \tau \times \tau )(W_{\Omega S}) \subset W_{\Omega} \]
    for every finite subset $\Omega$ of $H$. As the sets $W_\Omega$, where $\Omega$ runs over all finite subsets of $H$, form a base of entourages for the prodiscrete uniform structure on $A^H$, it follows that $\tau$ is uniformly continuous and it $\phi$-equivariant by Lemma \ref{le-equivariance}. \\
    Conversely, suppose that $\tau$ is uniformly continuous and $\phi$-equivariant. Consider the subset $\Omega = \{1_H\} \subset H$. Since $\tau$ is uniformly continuous, there exists a finite subset $S \subset G$, such that 
    \[ ( \tau \times \tau )(W_{S}) \subset W_{\Omega}, \]
    for all $x \in A^G$. Using the $\phi$-equivariant of $\tau$, we get
    \begin{eqnarray*}
        \tau(x)(h) &=& [ h^{-1} \tau(x) ](1_G) \\
        &=& \tau(\phi(h^{-1})x)(1_G) \\
        &=&  \mu (( x(\phi(h))) \vert_{S}), \\
    \end{eqnarray*}
     for all $x \in A^G$ and $h \in H$.
     This shows that $\tau$ is a $\phi$-cellular automaton with memory set $S$.
\end{proof}

\section{Linear $\phi$-Cellular Automaton}
\bigskip
In this section, we extend the linear version of the Curtis–Hedlund theorem \cite[Theorem 8.1.2]{CeccheriniSilberstein2010}
to $\phi$-Cellular Automaton in case where the alphabet is a vector space, as well as \cite[Proposition 8.1.1]{CeccheriniSilberstein2010}.

 Let $G$ be a group, $V$ be a vector space over a field $\mathbb{K}$, and $\phi$ be a homomorphism from $H$ to $G$.

\begin{definition}\label{lca-phi-def}
For any $\phi \in \Hom(H,G)$, a linear $\phi$-cellular automaton over the group $G$ and the alphabet $V$ is a cellular automaton $\tau: V^G \rightarrow V^H$ which is $\mathbb{K}$-Linear, i.e., which satisﬁes

\[ \tau(x + x') = \tau(x) + \tau(x') \] and \[ \tau(kx) = k\tau(x) \]
for all $x,x' \in V^G$ and $k \in \mathbb{K}$.
\end{definition}

By the previous definition \ref{lca-phi-def} we will extend \cite[Proposition 8.1.1]{CeccheriniSilberstein2010} to linear $\phi$-cellular automaton.

\begin{proposition}\label{lca-proposition}
For any $\phi \in \Hom(H,G)$, let $G,H$ be a groups and let $V$ be a vector space over a ﬁeld $\mathbb{K}$. Let $\tau: V^G \rightarrow V^H$ be a $\phi$-cellular automaton with memory set $S \subset G$ and local deﬁning map $\mu: V^S \rightarrow V$ Then $\tau$ is linear if and only if $\mu$ is $\mathbb{K}$-linear.
\end{proposition}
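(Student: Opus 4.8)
The plan is to prove the biconditional by establishing each implication separately, using the defining relation $\tau(x)(h) = \mu((\phi(h^{-1}) \cdot x)|_S)$ from Definition \ref{gca-def} as the bridge between the local map $\mu$ and the global map $\tau$. The key observation is that the algebraic operations on $V^G$ (addition and scalar multiplication) are defined pointwise, and the $G$-action, the restriction map $(\cdot)|_S$, and evaluation at $h$ are all $\mathbb{K}$-linear operations. This means that the linearity of $\tau$ can be tested pointwise, and at each point $h \in H$ it is governed entirely by $\mu$ applied to a restricted, shifted configuration.

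For the reverse direction, I would assume $\mu$ is $\mathbb{K}$-linear and verify the two defining properties of a linear map for $\tau$. First I would fix $x, x' \in V^G$ and $h \in H$, and compute $\tau(x + x')(h) = \mu((\phi(h^{-1}) \cdot (x + x'))|_S)$. The point is that the shift action satisfies $\phi(h^{-1}) \cdot (x + x') = \phi(h^{-1}) \cdot x + \phi(h^{-1}) \cdot x'$ (since the action is by translation, hence additive), and restriction to $S$ likewise respects sums; then $\mathbb{K}$-linearity of $\mu$ lets me split the expression into $\mu((\phi(h^{-1}) \cdot x)|_S) + \mu((\phi(h^{-1}) \cdot x')|_S) = \tau(x)(h) + \tau(x')(h)$. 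Since $h$ was arbitrary, $\tau(x + x') = \tau(x) + \tau(x')$. The scalar-multiplication identity $\tau(kx) = k\tau(x)$ follows by the identical argument with $kx$ in place of $x + x'$.

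For the forward direction, I would assume $\tau$ is linear and deduce that $\mu$ must be $\mathbb{K}$-linear. The natural approach is to recover $\mu$ from $\tau$ by evaluating at the identity: taking $h = 1_H$ gives $\tau(x)(1_H) = \mu((\phi(1_H^{-1}) \cdot x)|_S) = \mu(x|_S)$, since $\phi$ is a homomorphism and $\phi(1_H) = 1_G$ acts trivially. Thus $\mu(x|_S)$ coincides with the linear expression $\tau(x)(1_H)$. To show $\mu$ itself is linear on all of $V^S$, I must check that every pattern $p \in V^S$ arises as $x|_S$ for some $x \in V^G$; this is immediate because one can extend any $p: S \to V$ to a configuration on all of $G$ (for instance by setting it to $0$ outside $S$). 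Given patterns $p, p' \in V^S$ with chosen extensions $x, x'$, I would use that $(x + x')|_S = x|_S + x'|_S = p + p'$ together with the linearity of $\tau$ to get $\mu(p + p') = \tau(x + x')(1_H) = \tau(x)(1_H) + \tau(x')(1_H) = \mu(p) + \mu(p')$, and analogously for scalars.

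The main obstacle, and the step deserving the most care, is the bookkeeping in the forward direction: I must confirm that the restriction map $V^G \to V^S$ is surjective and linear so that testing $\mu$ on arbitrary patterns is legitimate, and that evaluation at $1_H$ of the linear map $\tau$ is itself linear in the input configuration. Neither is deep, but they are where the argument could silently break if the shift action failed to be additive or if some pattern were unreachable. Everything else is a routine unwinding of the pointwise definitions, so the proof reduces to carefully tracking how linearity passes through the chain of maps (shift, restrict, evaluate) that connect $\tau$ and $\mu$.
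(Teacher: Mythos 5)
Your proof is correct and follows essentially the same route as the paper: the converse direction (linearity of $\mu$ implies linearity of $\tau$) is the identical pointwise computation through the shift, restriction, and $\mu$. For the forward direction the paper simply defers to the cited reference as ``trivial,'' whereas you spell out the standard argument --- evaluate at $h = 1_H$ to recover $\mu(x|_S) = \tau(x)(1_H)$ and use surjectivity of restriction by extending patterns by zero --- which is exactly the intended argument and is handled with appropriate care.
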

\begin{proof}
    The first direction is trivial and follows the same proof as in \cite[Proposition 8.1.1]{CeccheriniSilberstein2010}.
    
    Now, let us prove the converse. Suppose that $\mu$ is $\mathbb{K}$-linear. Then for all $x,y \in V^G$, $k \in \mathbb{K}$ and $h \in H$, we have
    \begin{eqnarray*}
        \tau(x + y)(h) &=& \mu((\phi(h^{-1})(x + y))|_S) \\
        &=& \mu((\phi(h^{-1})x)|_S + (\phi(h^{-1})y)|_S) \\
        &=& \mu((\phi(h^{-1})x)|_S) + \mu((\phi(h^{-1})y)|_S) \\
        &=& \tau(x)(h) + \tau(y)(h) \\
        &=& (\tau(x) + \tau(y))(h)
    \end{eqnarray*}
    and 
    \begin{eqnarray*}
        \tau(kx)(h) &=& \mu((\phi(h^{-1})(kx)|_S) \\
        &=& \mu(k(\phi(h^{-1})x)|_S) \\
        &=& k\mu((\phi(h^{-1})x)|_S) \\
        &=& k\tau(x)(h)
    \end{eqnarray*}
    It follows that $\tau$ is linear.
\end{proof}

In this work, we generalize the first part of \cite[Theorem A]{shahryari2019notecellularautomata} to leverage it in our proofs.

\begin{lemma}\label{f-tao-uc}
    For any $\tau: A^G \to A^H$, define a map $f_\tau: A^G \to A$ by 
    \[ f_\tau(x) = \tau(x)(1) \]
    where $x\in A^G$, then $f_\tau$ is uniformly continuous.
\end{lemma}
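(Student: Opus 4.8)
The plan is to recognize $f_\tau$ as the composition $f_\tau = p_{1_H} \circ \tau$, where $p_{1_H} : A^H \to A$ is the coordinate evaluation $u \mapsto u(1_H)$, and then to obtain uniform continuity of $f_\tau$ from the uniform continuity of $\tau$ together with that of the projection. I would read the hypothesis as asserting that $\tau$ is a $\phi$-cellular automaton (or at least that it is uniformly continuous): for a genuinely arbitrary map $\tau$ the conclusion must fail, since one can let $\tau(x)(1_H)$ depend on infinitely many coordinates of $x$ and thereby destroy even ordinary continuity. Under that reading, Theorem \ref{Curtis-Hedlund-Uniformly} already supplies the uniform continuity of $\tau$ for the prodiscrete structures, so the remaining work is to deal with the projection and the composition.

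Concretely, I would argue straight from the defining formula of Definition \ref{gca-def}. Since $\phi \in \Hom(H,G)$ is a homomorphism, $\phi(1_H^{-1}) = 1_G$, and the identity of $G$ acts trivially on $A^G$; hence for every $x \in A^G$,
\[ f_\tau(x) = \tau(x)(1_H) = \mu((\phi(1_H^{-1}) \cdot x)|_S) = \mu(x|_S). \]
This exhibits $f_\tau$ as $\mu \circ \rho_S$, where $\rho_S : A^G \to A^S$ is restriction $x \mapsto x|_S$ to the finite memory set $S$. The structural point is that $f_\tau(x)$ depends on $x$ only through the finitely many coordinates indexed by $S$, which is precisely the feature that the cellular-automaton hypothesis buys.

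It then remains to phrase this finite dependence in terms of entourages. Equipping $A$ with its discrete uniform structure, it suffices to control the diagonal $\Delta_A$, the smallest entourage of $A$. Taking the basic entourage $W_S = \{(x,y) : x|_S = y|_S\}$ of the prodiscrete structure on $A^G$ (the same entourages used in the proof of Theorem \ref{Curtis-Hedlund-Uniformly}), the displayed computation gives $f_\tau(x) = \mu(x|_S) = \mu(y|_S) = f_\tau(y)$ whenever $(x,y) \in W_S$, that is, $(f_\tau \times f_\tau)(W_S) \subseteq \Delta_A$; since the $W_\Omega$ form a base of entourages, this is exactly uniform continuity of $f_\tau$. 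Equivalently, $\rho_S$ is uniformly continuous as a coordinate projection and $\mu : A^S \to A$ is automatically uniformly continuous because $A^S$ is finite, hence discrete, so the composite is uniformly continuous. The only genuine obstacle is the hypothesis itself: the statement is false for an arbitrary $\tau$, and all the content is carried by the cellular-automaton structure forcing $f_\tau$ to factor through restriction to $S$; once that factorization is in hand, the remaining steps are routine and need nothing beyond Definition \ref{gca-def} and the prodiscrete base of entourages.
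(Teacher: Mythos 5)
Your proof is correct and follows essentially the same route as the paper's: both reduce to the observation that for a $\phi$-cellular automaton with memory set $S$ the value $f_\tau(x)$ depends only on $x|_S$, so that $W_S \subseteq (f_\tau \times f_\tau)^{-1}(\Delta_A)$ and uniform continuity follows from the base of entourages of the prodiscrete structure. Your explicit remark that the hypothesis must be that $\tau$ is a $\phi$-cellular automaton (the statement being false for a genuinely arbitrary $\tau$) is precisely what the paper's proof silently assumes when it invokes ``a memory set $\Omega$'' and asserts $x|_\Omega = y|_\Omega \Rightarrow \tau(x)(1) = \tau(y)(1)$; your version, with the explicit computation $f_\tau(x) = \mu(x|_S)$, is the more careful rendering of the same argument.
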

\begin{proof}
    Let $G, H$ be a groups, $\phi \in \Hom(H,G)$, and let $\mathcal{U}$ be a prodiscreate uniform structure over $A^G$. For a memory set $\Omega \subset G$, we have $W_\Omega \in \mathcal{U}$ where $W_\Omega = \{(x,y) \in A^G \times A^G : x|_\Omega = y|_\Omega\}$.
    Consider a map $f_\tau \times f_\tau \mapsto A \times A$ defined by
    \[ (f_\tau \times f_\tau)(x,y) = (f_\tau(x),f_\tau(y)). \]
    That implies 
    \[ x|_\Omega = y|_\Omega \Rightarrow \tau(x)(1) = \tau(y)(1). \]
    That's mean $W_\Omega \subseteq (f_\tau \times f_\tau )^{-1}(\Delta_A) \in \mathcal{U}$.
    It follows that $f_\tau$ is uniformly continuous.
\end{proof}

The next result extends the linear version of the Curtis–Hedlund theorem \cite[Proposition 8.1.2]{CeccheriniSilberstein2010}.

\begin{theorem}\label{lca-cht}
    For any $\phi \in \Hom(H,G)$, let $G$ be a group and let $V$ be a vector space over a ﬁeld $\mathbb{K}$. Let $\tau: V^G \rightarrow V^H$ be a $\phi$-equivariant and $\mathbb{K}$-linear map. Then the following conditions are equivalent:
    \begin{itemize}
        \item the map $\tau$ is a linear cellular automaton;
        \item the map $\tau$ is uniformly continuous (with respect to the prodiscrete uniform structure on $V^G$ and $V^H$);
        \item the map $\tau$ is continuous (with respect to the prodiscrete topology on $V^G$ and $V^H$);
        \item the map $\tau$ is continuous (with respect to the prodiscrete topology on $V^G$ and $V^H$) at the constant conﬁguration $x = 0$.
    \end{itemize}
\end{theorem}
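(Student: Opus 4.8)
The plan is to establish the four stated conditions as equivalent by proving the cyclic chain of implications (first) $\Rightarrow$ (second) $\Rightarrow$ (third) $\Rightarrow$ (fourth) $\Rightarrow$ (first), leaning on the tools already built for the $\phi$-equivariant setting. Throughout, I would keep in force the standing hypotheses that $\tau$ is $\phi$-equivariant and $\mathbb{K}$-linear, so that at each link only the stated topological or structural property needs to be checked; in particular, since $\tau$ is already linear, condition (first) amounts to saying that $\tau$ is a $\phi$-cellular automaton.

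The three forward links come almost for free. For (first) $\Rightarrow$ (second): a linear $\phi$-cellular automaton is in particular a $\phi$-cellular automaton, so Theorem \ref{Curtis-Hedlund-Uniformly} yields uniform continuity at once. For (second) $\Rightarrow$ (third): uniform continuity implies continuity by Proposition \ref{uf-con}. For (third) $\Rightarrow$ (fourth): continuity on all of $V^G$ restricts trivially to continuity at the single point $x = 0$.

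The substance lies in the closing implication (fourth) $\Rightarrow$ (first), where I would upgrade continuity at the origin to global uniform continuity by exploiting linearity, and then apply Theorem \ref{Curtis-Hedlund-Uniformly} a second time. Because $\tau$ is linear we have $\tau(0) = 0$, so a basic prodiscrete neighborhood of $\tau(0)$ has the form $\{z \in V^H : z|_{\Omega'} = 0\}$ for a finite $\Omega' \subset H$. Continuity at $0$ then produces a finite $\Omega \subset G$ such that every $w \in V^G$ with $w|_{\Omega} = 0$ satisfies $\tau(w)|_{\Omega'} = 0$. Given an arbitrary pair $(x,y) \in W_{\Omega}$, i.e. $x|_{\Omega} = y|_{\Omega}$, I would apply this estimate to the difference $w = x - y$, which vanishes on $\Omega$; linearity gives $\tau(x - y) = \tau(x) - \tau(y)$, whence $\tau(x)|_{\Omega'} = \tau(y)|_{\Omega'}$, that is $(\tau \times \tau)(W_{\Omega}) \subseteq W_{\Omega'}$. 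Since the sets $W_{\Omega'}$, as $\Omega'$ ranges over finite subsets of $H$, form a base of entourages for the prodiscrete uniform structure on $V^H$, this establishes that $\tau$ is uniformly continuous. Being also $\phi$-equivariant, $\tau$ is a $\phi$-cellular automaton by Theorem \ref{Curtis-Hedlund-Uniformly}, and since it is $\mathbb{K}$-linear it is a linear $\phi$-cellular automaton, closing the cycle.

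I expect the only genuine obstacle to be this final step, and specifically the passage that converts a single continuity-at-zero estimate into a uniform estimate holding for all pairs $(x,y)$ simultaneously. The difference trick $w = x - y$ combined with $\mathbb{K}$-linearity is precisely what makes this possible, and it is where the linear hypothesis becomes indispensable: for general $\phi$-cellular automata continuity at one point need not force uniform continuity, so the equivalence genuinely relies on the vector-space structure of the alphabet rather than on formal topological manipulation alone.
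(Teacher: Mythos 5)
Your proof is correct, and your three forward implications coincide with the paper's. The comparison of interest is in the closing implication, where both arguments rest on the same key idea --- using $\mathbb{K}$-linearity and the difference $w = x - y$ to upgrade continuity at the single configuration $0$ --- but funnel it through different endpoints. The paper extracts the estimate only at the identity coordinate: continuity at $0$ yields a finite $M \subset G$ with $x|_M = 0 \Rightarrow \tau(x)(1_H) = 0$, whence $p_{1_H}\circ\tau$ is constant on each $V(x,M)$, and Lemma \ref{le-constant} then certifies $\tau$ as a $\phi$-cellular automaton with memory set $M$. You instead run the difference trick over every finite $\Omega' \subset H$ to obtain $(\tau\times\tau)(W_\Omega)\subseteq W_{\Omega'}$, i.e.\ full uniform continuity, and then close the cycle with Theorem \ref{Curtis-Hedlund-Uniformly}. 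Your route is slightly longer but self-contained and arguably cleaner: it makes the difference trick explicit (the paper leaves the passage from the estimate at $0$ to constancy on $V(x,M)$ implicit), and it bypasses the paper's appeal to Lemma \ref{f-tao-uc}, whose statement (uniform continuity of $f_\tau$ for an arbitrary map $\tau$) is stronger than what is actually needed or available at that point. What the paper's route buys is economy --- the continuity hypothesis is used at a single coordinate and the constancy characterization of Lemma \ref{le-constant} does the rest --- while yours buys transparency and reuses the uniform Curtis--Hedlund theorem already proved.
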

\begin{proof}
    (a) $\Rightarrow$ (b): This follows immediately from Theorem \ref{Curtis-Hedlund-Uniformly}.  

(b) $\Rightarrow$ (c): This also follows immediately, as the topologies on $A^G$ and $A^H$ are associated with the prodiscrete uniform structure. Since every uniformly continuous map is continuous, it follows from Proposition \ref{uf-con} that $\tau$ is continuous.  

(c) $\Rightarrow$ (d): This implication is trivial.  

(d) $\Rightarrow$ (a): Suppose that $\tau$ is continuous at 0. Then By Lemma \ref{f-tao-uc} and Proposition \ref{uf-con} the map $f_{\tau}: A^G \to A$ is continuous. We deduce that there exists finite $M \subset G$ such that if $x \in V^G$ satisfies $x(m) = 0$ for all $m \in M$, then $\tau(x)(1) = 0$. Then the map $f_{\tau}$ is constant on $V(x,M)$, for every $x \in A^G$, by Lemma \ref{le-constant} we conclude that $\tau$ is $\phi$-cellular automaton with memory set M.
Thus, (d) implies (a).

\end{proof}

\section{Covering $\phi$-Cellular Automaton}
\bigskip
Studying covering maps is important in distributed computing, graph theory, and algebraic topology. Notable works in these areas include \cite{Angluin1980} in distributed computing, \cite{SDS2008} in sequential dynamical systems, and \cite{Sunada2013} in topological crystallography. 

In this section, we will explore specific cases of covering maps for cellular automata. When we began working on this thesis, we initially considered \( A^G \) as a covering space. However, by Proposition \ref{ag-prop}, we found that \( A^G \) is a totally disconnected and not path-connected space. Therefore, we approached the concept of covering maps from a different perspective and identified \(\phi\)-cellular automata as a promising starting point.

Moreover, we will study a \( \phi \)-cellular automaton over a circulant graph, which is a Cayley graph for a cyclic group. We choose this starting point because this graph can be viewed as a group, as the set of vertices forms a cyclic group. Thus, we can generalize all previous results of \( \phi \)-cellular automata over groups to this setting. We hope this work serves as a foundation or inspiration for studying other types of graphs generated from groups and their covering maps.

The quotient map is a covering map, and this map suits our purpose because it can be viewed both as a group homomorphism and as a graph homomorphism, specifically a covering map. Since our case involves Cayley graphs where the set of vertices forms a cyclic group, we can study a covering of \( \phi \)-cellular automaton using this framework. However, it is important to note that not every graph homomorphism is a group homomorphism.

\begin{lemma}\label{le-cover-group}
Let \( G \) be a cyclic group, and let \( N \trianglelefteq G \), where \( G/N \) is a quotient group of \( G \). Then the quotient map \( \psi: G \to G/N \) is a graph homomorphism, and in particular, a covering map from \( \Circ_G \) to \( \Circ_{G/N} \).

\end{lemma}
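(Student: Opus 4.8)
The plan is to verify the two defining conditions of a covering map from Definition~\ref{def-cover}: that $\psi$ is a graph homomorphism (edges go to edges), and that $\psi$ restricts to a bijection between the neighbors of each vertex $v$ and the neighbors of $\psi(v)$. The key structural input is that both $\Circ_G$ and $\Circ_{G/N}$ are Cayley graphs of cyclic groups, so I want to track how the connection set $S \subseteq G$ transports under $\psi$. Since $G$ is cyclic, by Proposition~\ref{qg-cyclic} the quotient $G/N$ is again cyclic, so $\Circ_{G/N}$ is a genuine circulant and the statement is well-posed; the natural choice of connection set on the target is $\psi(S) = \{\psi(s) : s \in S\}$.

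First I would recall that in $\Circ_G$ the edge relation is $(g, g')\in E$ iff $g' = gs$ for some $s\in S$. Applying $\psi$ and using that it is a group homomorphism gives $\psi(g') = \psi(g)\psi(s)$ with $\psi(s)\in\psi(S)$, which is exactly the edge condition in $\Circ_{G/N}$; this establishes the graph-homomorphism property in one line. Surjectivity of $\psi$ as a set map is immediate since the quotient epimorphism is onto, giving the covering-map (surjective homomorphism) condition of Definition~\ref{graph-hom}. The substantive work is the local-bijection condition: I would fix $v\in G$, note its neighbors are exactly $\{vs : s\in S\}$, and show $\psi$ sends this set bijectively onto the neighbors $\{\psi(v)\bar{s} : \bar{s}\in\psi(S)\}$ of $\psi(v)$. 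Using left-translation I can reduce to the case $v = 1_G$, since the Cayley-graph structure is vertex-transitive and $\psi$ intertwines left translations.

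The main obstacle I anticipate is precisely this local injectivity: a priori two distinct generators $s_1, s_2\in S$ could satisfy $\psi(s_1) = \psi(s_2)$, i.e. $s_1 s_2^{-1}\in N$, which would collapse two neighbors of $v$ to a single neighbor of $\psi(v)$ and destroy the one-to-one correspondence. So the proof genuinely needs a hypothesis (or an argument) guaranteeing that $\psi|_S$ is injective and that $N\cap S = \emptyset$ (otherwise an edge would map to a loop, contradicting the standing assumption that graphs are loop-free). I would therefore either impose that $S$ contains at most one representative from each nontrivial coset of $N$, or derive it from the circulant setting; under such a condition the map $s\mapsto\psi(s)$ is a bijection $S\to\psi(S)$, and combined with left-translation this yields the required neighbor-to-neighbor bijection.

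Finally, I would assemble the pieces: the homomorphism property gives edges-to-edges, surjectivity of the quotient gives the covering surjection, and the injectivity of $\psi|_S$ (together with $S\cap N=\emptyset$) gives the local bijection, so by Definition~\ref{def-cover} the map $\psi$ is a covering map from $\Circ_G$ to $\Circ_{G/N}$. I expect the cyclicity of $G$ to be used chiefly through Proposition~\ref{qg-cyclic} to ensure the target is a circulant, and through vertex-transitivity to reduce the local condition to the identity vertex.
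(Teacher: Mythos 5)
Your core argument coincides with the paper's: show that $\psi$ is a group homomorphism, invoke Proposition~\ref{qg-cyclic} so that $\Circ_{G/N}$ is a well-defined circulant, check that edges map to edges via $\psi(gs)=\psi(g)\psi(s)$, and conclude from surjectivity of the quotient map that $\psi$ is a covering map. The divergence is in which definition of ``covering map'' you target. The paper's Definition~\ref{graph-hom} declares a covering map to be nothing more than a surjective graph homomorphism, and its proof stops exactly there; it never attempts the local neighbor-to-neighbor bijection. You instead aim at Definition~\ref{def-cover}, which does demand that bijection, and you correctly observe that it cannot be established from the stated hypotheses alone: if two generators $s_1,s_2\in S$ lie in the same coset of $N$, distinct neighbors of $v$ collapse to one neighbor of $\psi(v)$, and if $S\cap N\neq\emptyset$ an edge of $\Circ_G$ is sent to a loop. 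That second observation in fact exposes a weakness in the paper's own one-line claim that $\psi$ is a graph homomorphism: with $S\cap N\neq\emptyset$ the image of an edge is not an edge of the loop-free graph $\Circ_{G/N}$, and the paper never specifies the connection set on the quotient (implicitly $\psi(S)\setminus\{1\}$), which your proof makes explicit. So for the lemma as the paper means it (Definition~\ref{graph-hom}), your first two paragraphs already suffice and the extra hypotheses you propose are not needed; for the stronger covering notion of Definition~\ref{def-cover}, you are right that additional assumptions relating $S$ to $N$ are genuinely required, and neither the lemma statement nor the paper's proof supplies them.
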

\begin{proof}
Let \( G \) be a cyclic group and let \( N \trianglelefteq G \) such that \( G/N \) is a quotient group of \( G \). Now, for the quotient map \( \psi: G \rightarrow G/N \), we can clearly see that it is a homomorphism:  
\[
\psi(x)\psi(y) = xN \cdot yN = xyNN = xyN = \psi(xy),
\]  
for every \( x, y \in G \).  

Since \( G/N \) is cyclic by Proposition \ref{qg-cyclic}, we can consider the circulant graphs \( \Circ_{G/N} \) and \( \Circ_G \), where \( G/N \) and \( G \) are the sets of vertices, respectively.  

This means, by Definition \ref{graph-hom}, that \( \psi \) is a graph homomorphism. Furthermore, since \( \psi \) is surjective, it follows that \( \psi \) is a covering map.
\end{proof}

\begin{corollary}
    A covering map $\psi: \Circ_G \rightarrow \Circ_{G/N}$ is $|N|$-fold.
\end{corollary}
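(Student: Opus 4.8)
The plan is to verify directly that the covering map $\psi$ is $|N|$-to-one, since by Definition \ref{graph-hom} an $n$-fold covering map is precisely one that is $n$-to-one. By Lemma \ref{le-cover-group} we already know that $\psi: \Circ_G \to \Circ_{G/N}$ is a covering map, so the only remaining task is to count the cardinality of each fiber $\psi^{-1}(c)$ for $c$ ranging over the vertex set $G/N$ of $\Circ_{G/N}$.

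First I would fix an arbitrary coset $c = gN \in G/N$ and describe its preimage under the quotient map explicitly. Since $\psi(x) = xN$, an element $x \in G$ lies in $\psi^{-1}(gN)$ if and only if $xN = gN$, which holds if and only if $x \in gN$. Hence $\psi^{-1}(gN) = gN$, so the fiber over any vertex of $\Circ_{G/N}$ is exactly the corresponding coset in $G$.

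Next I would count $|gN|$. The map $N \to gN$ sending $n \mapsto gn$ is a bijection, with inverse $y \mapsto g^{-1}y$, so $|gN| = |N|$ for every $g \in G$. Therefore every fiber of $\psi$ has exactly $|N|$ elements, which shows that $\psi$ is $|N|$-to-one, and hence $|N|$-fold by Definition \ref{graph-hom}.

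The argument is essentially a restatement of the standard fact that all cosets of $N$ share the same cardinality, so there is no substantial obstacle. The only point requiring a little care is to match the graph-theoretic notion of an $n$-fold cover from Definition \ref{graph-hom} with the group-theoretic computation of the fibers, confirming that the relevant vertex sets being counted are exactly $G$ and $G/N$, and that $\psi$ acting on vertices coincides with the quotient homomorphism already shown to be a covering map.
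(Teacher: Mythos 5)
Your proof is correct and follows essentially the same route as the paper, which simply declares the result trivial by Definition \ref{graph-hom}; you have merely filled in the (standard) details that the fiber over a vertex $gN$ is the coset $gN$ and that all cosets of $N$ have cardinality $|N|$. No issues.
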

\begin{proof}
    This is trivial by definition \ref{graph-hom}.
\end{proof}

\begin{theorem}
    Let \( \Circ_G \) and \( \Circ_{H} \) be a circulant graphs, If \( \Circ_{H} \) cover \( \Circ_G \) by the covering map $\phi$, then there exist an injective \( \phi \)-cellular automaton.
\end{theorem}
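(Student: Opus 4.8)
The plan is to exhibit the required automaton explicitly rather than build one from scratch: the natural candidate is the map $\phi^\star$ from equation \ref{star}. Recall that for any $\phi \in \Hom(H,G)$ the map $\phi^\star : A^G \to A^H$ given by $\phi^\star(x) = x \circ \phi$ is already a $\phi$-cellular automaton, with memory set $S = \{e_G\}$ and local function $\mu = \mathrm{id}_A$. Consequently the entire problem collapses to a single question: is $\phi^\star$ injective? By Lemma \ref{le-star}(3) this is equivalent to asking whether $\phi$ is surjective, so the argument reduces to checking surjectivity of $\phi$ together with the fact that $\phi$ is a group homomorphism (so that the symbol ``$\phi$-cellular automaton'' is meaningful).

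Surjectivity is the easy half. Since $\Circ_H$ covers $\Circ_G$ via $\phi$, the map $\phi$ is by Definition \ref{graph-hom} a covering map, hence a surjective graph homomorphism; in particular it is onto as a map of vertex sets. The genuinely delicate point, and the one I expect to be the main obstacle, is to promote $\phi$ from a graph homomorphism $\Circ_H \to \Circ_G$ to an element of $\Hom(H,G)$, because—as the paper itself emphasizes—an arbitrary graph homomorphism between circulant graphs need not respect the underlying group multiplication. To resolve this I would appeal directly to the framework of Lemma \ref{le-cover-group}: in our setting the vertex sets $H$ and $G$ are cyclic groups, and the covering realizes $G$ as a quotient $H/N$ with $N = \ker(\phi)$, so that $\phi$ may be identified with the quotient epimorphism $H \to H/N \cong G$. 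That quotient map is simultaneously a surjective group homomorphism and the covering map of circulant graphs, which is precisely what Lemma \ref{le-cover-group} guarantees; this identification is the crux of the theorem, and everything downstream is formal.

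Once $\phi$ is established as a surjective element of $\Hom(H,G)$, the conclusion follows immediately: Lemma \ref{le-star}(3) yields that $\phi^\star$ is injective, and since $\phi^\star$ is a $\phi$-cellular automaton by construction, it is the desired injective $\phi$-cellular automaton. I would therefore organize the write-up as three short steps—(i) invoke covering $\Rightarrow$ surjective graph homomorphism, (ii) invoke Lemma \ref{le-cover-group} to read $\phi$ as a surjective group homomorphism, and (iii) apply Lemma \ref{le-star}(3) to $\phi^\star$—with the weight of the proof resting entirely on step (ii).
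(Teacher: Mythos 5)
Your proposal is essentially identical to the paper's own proof: both take $\phi^\star(x) = x \circ \phi$ from equation \ref{star} as the candidate automaton, invoke Lemma \ref{le-cover-group} to treat the covering map as a surjective group homomorphism, and conclude injectivity of $\phi^\star$ from surjectivity of $\phi$ via Lemma \ref{le-star}(3). You are right to single out step (ii) as the crux, and it is worth noting that Lemma \ref{le-cover-group} as stated only passes from a quotient group homomorphism to a graph covering, not the reverse, so both your write-up (which already presupposes a kernel $N = \ker(\phi)$ for a map not yet known to be a group homomorphism) and the paper's proof rely on a converse that is asserted rather than proved.
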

\begin{proof}
    Let \( \phi: \Circ_{H} \to \Circ_G \) be a covering map, As we know from the Lemma \ref{le-cover-group} the $\phi$ is a group homomorphism, then we can consider \( \phi^* = x \circ \phi \) for $x \in A^{\Circ_{G}}$, It follows that \( \phi^* \) is a \( \phi \)-cellular automaton by equation \ref{star}. Also, since \( \phi \) is surjective, it follows by Lemma \ref{le-star} that \( \phi^* \) is injective.

\end{proof}

\section{Future Work}
\bigskip
Cellular automata are a promising topic, and there are still many questions and areas that need further exploration. In future work, we plan to study the algebraic and topological properties of the covering map of \( \phi \)-cellular automata.

Let \( \tau: A^G \to A^H \) be a \( \phi \)-cellular automaton where \( G \) covers \( X \) and \( H \) covers \( Y \). Can there exist a \( \psi \)-cellular automaton \( \hat{\tau}: A^X \to A^Y \) that covers \( \tau \)? Alternatively, is there a map \( \tau \to \hat{\tau} \), and what are the properties of this map?

Additionally, we plan to study the case where \( G \) is a topological group and investigate the covering properties of the cellular automaton. In this work, we focus on a special case of covering for circulant graphs. We aim to extend the results to other types of graphs and study other types of covers, such as double covers, circulant double covers, finite common covers, and universal covers.

Furthermore, A cellular automaton with different alphabets, which we plan to analyze in depth. Understanding how the choice of alphabets affects the properties of cellular automata and their covers will be an important direction for future research.

$\,$

$\,$

\end{document}